\NeedsTeXFormat{LaTeX2e}
[1994/12/01]
\documentclass[11pt,reqno]{article}

\title{A criterion for transience of multidimensional
branching random walk in random environment}
\author{Sebastian M\"uller}
\usepackage{amsmath,amsthm,amsfonts,bm}
\usepackage{t1enc}
\bibliographystyle{plain}

\setlength{\textwidth}{160.0mm} \setlength{\oddsidemargin}{-.0mm}
\setlength{\evensidemargin}{-.0mm} \setlength{\textheight}{8.4in}

\chardef\bslash=`\\ 





\hfuzz1pc 


\newtheorem{thm}{Theorem}[section]
\newtheorem{cor}[thm]{Corollary}
\newtheorem{lem}[thm]{Lemma}

\theoremstyle{definition}
\newtheorem{defn}{Definition}[section]

\theoremstyle{remark}
\newtheorem{rem}{Remark}[section]


\def\orig{o}
\newcommand{\eps}{\varepsilon}

\newcommand{\R}{\mathbb{R}}
\newcommand{\Z}{\mathbb{Z}}
\newcommand{\N}{\mathbb{N}}
\renewcommand{\P}{\mathbb{P}}
\newcommand{\E}{\mathbb{E}}
\newcommand{\U}{\mathfrak{U}}
\renewcommand{\S}{\mathfrak{S}}
\newcommand{\V}{\mathcal{V}}
\newcommand{\M}{\mathcal{M}}
\newcommand{\K}{\mathcal{K}}
\newcommand{\bo}{\bm\omega}

\newcommand{\bs}{\bm\sigma}

\newcommand{\kommentar}[1]{}


\newcommand{\eval}[2][\right]{\relax
  \ifx#1\right\relax \left.\fi#2#1\rvert}



\begin{document}
\maketitle {\abstract We develop a criterion for transience for a
 general model of branching Markov chains. In the case of
multi-dimensional branching random walk in random environment
(BRWRE) this criterion becomes explicit. In particular, we show
that \emph{Condition L} of  \textsc{Comets and Popov}
\cite{comets:05} is necessary and sufficient for transience as
conjectured. Furthermore, the criterion applies to two important
classes of branching random walks and implies that the critical
branching random walk is transient resp. dies out locally.}
\newline {\scshape Keywords:} branching Markov chains,
recurrence, transience, random environment, spectral radius
\newline {\scshape AMS 2000 Mathematics Subject Classification:}
60K37, 60J10, 60J80
\renewcommand{\sectionmark}[1]{}
\section{Introduction}

A branching Markov chain (BMC)  is  a system of particles in
discrete time. The BMC starts with one particle in an arbitrary
starting position $x.$ At each time each particle is independently
replaced by some new particles at some locations according to
given stochastic substitutions rules that may depend on the
location of the substituted particle. Observe that this model is
more general than the model studied in \cite{comets98},
\cite{gantert:04}, \cite{machado:00}, \cite{machado:03},
\cite{mueller06}, where first the particles branch and then,
independently of the branching, move according to an underlying
Markov chain. In some sense the behavior of BMC is more delicate
than the one of Markov chains: while an irreducible Markov chain
is either recurrent or transient, either all or none states are
visited infinitely often, this dichotomy breaks down for BMC.  Let
$\alpha(x)$ be the probability that, starting the BMC in $x$, the
state $x$ is hit infinitely often by some particles. There are
three possible regimes: \emph{transient} $(\alpha(x)=0~ \forall
x)$, \emph{weakly recurrent} $(0<\alpha(x)<1 \mbox{ for some } x)$
and \emph{strongly recurrent} $(\alpha(x)=1~ \forall x),$ compare
with \textsc{Gantert and Müller} \cite{gantert:04} and
\textsc{Benjamini and Peres} \cite{benjamini:94}.

This paper is divided in two parts. First we connect transience
with the existence of superharmonic functions, see Theorem
\ref{thm:1}, and give a criterion for transience of BMC in Theorem
\ref{thm:2}. These two criteria are interesting on their own:
while the first is stated in terms of superharmonic functions, the
second is appropriate to give explicit conditions for transience.
In addition, we see that transience does not depend on the whole
distributions of the substitution rules but only on their first
moments. There are two important classes of BMC where one can
speak of criticality, compare with \cite{gantert:04} and
\cite{pemantle:01}, \cite{stacey:03}. In these cases Theorem
\ref{thm:1} implies that the critical process is transient resp.
dies out locally, compare with Subsection \ref{sub:critical}.

In the second part we follow the line of research of
\textsc{Comets, Menshikov and Popov} \cite{comets98},
\textsc{Comets and Popov} \cite{comets:05}, \textsc{Machado and
Popov} \cite{machado:00}, \cite{machado:03} and the author
\cite{mueller06} and study transience and recurrence of branching
random walk in random environment (BRWRE). In this case we can use
the criterion of the first part, Theorem \ref{thm:2}, to obtain a
classification of BRWRE in transient and strong recurrent regimes.
In particular, we show that the sufficient \emph{Condition L} for
transience  of \textsc{Comets and Popov} \cite{comets:05} is
necessary, too. Classification results of this type were only
known for nearest neighbor BRWRE on $\Z,$ \cite{comets98}, and on
homogeneous trees, \cite{machado:03}. In addition, we show that
transience does not depend on the precise form of the
distributions, but only on the convex hull of their support. Such
phenomena are known for models of this type, compare with
\cite{comets98}, \cite{comets:05}, \cite{machado:00},
\cite{machado:03}, \cite{mueller06}. Our method is quite different
from \cite{comets:05} since we don't analyze the process on the
level of the particles but use superharmonic functions to describe
the process on a more abstract level. The only points where we
really deal with \emph{particles} are the proofs of Theorems
\ref{thm:1} and \ref{thm:0-1}. Furthermore, the only point where
we need the structure of the lattice $\Z^d$ is where the criterion
becomes explicit, see Lemma \ref{lem:4}. All other arguments
immediately apply for BRWRE on Cayley graphs, compare with
\cite{mueller06}. In the special case where branching and movement
are independent the classification in transience and recurrence is
already given in \cite{mueller06}.

The obtained classification result is quite interesting facing the
difficulty of the corresponding questions for  random walks in
random environment of a single particle, compare with
\textsc{Sznitman} \cite{sznitman:02} and \cite{sznitman:03}.

\subsubsection*{Acknowledgment}
The author thanks Nina Gantert and Serguei Popov for valuable
discussions and helpful comments on a previous version of this
paper.

\section{Transience and recurrence for general BMC}

Let $X$ denote the discrete state space of our process. For every
$x\in X$ let
$$\V(x):=\left\{v(x)=(v_y(x))_ {y\in X}: v_y(x)\in\N,~
\sum_{y\in X} v_y(x)\geq 1\right\}$$ be the set of all possible
substitution rules. Furthermore, let $\omega_x$ be a probability
measure on $\V(x)$ and call $\bm\omega :=(\omega_x)_{x\in X}$ the
environment of our process.

The process is defined  inductively: at time $n=0$ we start the
process at some fixed starting position, say $\orig,$ with one
particle. At each integer time the particles are independently
substituted as follows: for each particle in $x\in X$
independently of the other particles and the previous history of
the process a random element of $v(x)\in \V(x)$ is chosen
according to  $\omega_x.$ Then, the particle is replaced by
$v_y(x)$ offspring particles at $y$ for all $y\in Y.$ Thus the BMC
is entirely described through the environment $\bm\omega$ on $X$
and is denoted $(X,\bm\omega).$ In the definition of $\V(x)$ we
demand that the process survives forever: $\sum_{y\in X}
v_y(x)\geq 1$ ensures that each particle has at least one
offspring. This assumption is made for the sake of a better
presentation and to avoid the conditioning on the survival of the
process. A key quantity are the first moments of the substitution
rules $M:=(m(x,y))_{x,y\in X},$ where
$$m(x,y):=\sum_{v\in \V(x)} \omega_x(v) v_y(x)$$ denotes the expected number
of offspring sent to $y$ by a particle in $x.$  Let
$M^n=(m^{(n)}(x,y))_{x,y\in X}$ be the $n$-fold convolution of $M$
with itself and set $M^0=I,$ the identity matrix over $X.$ We will
always assume that $M$ is irreducible:

\noindent {\bf General Assumption:} Let $\bm\omega$ such that $M$
is irreducible, i.e., for all $x,y\in X$ there exists some $k$
such that $m^{(k)}(x,y)>0.$

\begin{rem}
Let $P=\left(p(x,y)\right)_{x,y\in X}$ be the transition kernel of
an irreducible Markov chain on $X.$ Then the assumption on
irreducibility is fulfilled, if for all $x\in X$ we have
$\omega_x(v_y(x)\geq 1)>0$ for all $y$ with $p(x,y)>0.$
\end{rem}

\begin{rem}
The BMC $(X,\bm\omega)$ is a general branching process in the
sense of \textsc{Harris} \cite{harris:63} with first moment $M:$
each particle of the general branching process is characterized by
a parameter $x$ which describes its position in the state space
$X.$
\end{rem}

In order to analyze the process we introduce the following
notations. Let $\eta(n)$ be the total number of particles at time
$n$ and let $x_i(n)$ denote the position of the $i$th particle at
time $n.$ Denote $\P_x(\cdot)=\P(\cdot\mid x_0(1)=x)$ the
probability measure for a BMC started with one particle in $x.$

We define recurrence and transience for  BMC  in analogy to
\cite{benjamini:94} and \cite{gantert:04}:
\begin{defn} Let
\begin{equation}\label{eq:1} \alpha(x):=\P_x\left(
\sum_{n=1}^\infty
\sum_{i=1}^{\eta(n)}\mathbf{1}\{x_i(n)=x\}=\infty\right)
\end{equation} be the probability that $x$ is visited infinitely
often.
 A BMC is {\it transient,} if $\alpha(x)=0$ for all  $x\in X,$
 and
{\it recurrent} otherwise. The recurrent regime is divided into
{\it weakly recurrent,} $\alpha(x)<1$ for some $x$ and
 {\it strongly recurrent,} $\alpha(x)=1$ for all $x\in X.$
\end{defn}
The definition of transience and recurrence does not depend on the
starting position of the process. In fact, due to the
irreducibility,
 $\alpha(x)>0$ and $\alpha(x)=0$ hold
either for all or none $x\in X.$ This can be shown analogously to
\cite{benjamini:94}. In contrast to the model with independent
branching and movement we don't have that $\alpha(x)=1$ either for
all or none $x,$ compare with Example $1$ in \cite{comets:05}.
Observe that our definition differs in the general setting from
the one in \cite{comets:05} but coincide in the case of branching
random walk in random environment, compare with Theorem
\ref{thm:0-1}.

The following criterion for transience in terms of superharmonic
functions is a straightforward generalization of Theorem 3.1 in
\cite{gantert:04}. We give the proof since it makes clear where
the superharmonic functions come into play. In addition, it is the
essential  point where we work on the \emph{particle level.}

\begin{thm}\label{thm:1}
The BMC $(X,\bm\omega)$ is transient if and only if there exists a
positive function $f,$ such that
\begin{equation}\label{eq:1.1}
Mf(x):=\sum_y m(x,y) f(y) \leq f(x)\quad \forall x\in X.
\end{equation}

\end{thm}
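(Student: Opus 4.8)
The plan is to prove the two implications separately: the ``if'' direction at the level of particles via a supermartingale, and the ``only if'' direction by a potential-theoretic construction. For ``if'', suppose a positive $f$ with $Mf\le f$ is given, and introduce the weighted total mass
\begin{equation*}
W_n:=\sum_{i=1}^{\eta(n)} f\bigl(x_i(n)\bigr).
\end{equation*}
Since each particle is substituted independently and $m(x,y)$ is exactly the expected number of offspring sent from $x$ to $y$, a one-line conditional-expectation computation gives $\E_x[W_{n+1}\mid\mathcal F_n]=\sum_i (Mf)(x_i(n))\le W_n$, where $\mathcal F_n$ is the natural filtration. Thus $(W_n)$ is a nonnegative supermartingale under $\P_x$ and converges almost surely to a finite limit.

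The second step turns this into an almost-sure statement about visits to $x$. I would pass to the process that counts, along each genealogical line, the first return to $x$; by the branching Markov property this is an ordinary Galton--Watson process, each first-return particle founding an independent copy of $(X,\bo)$ started at $x$. Applying the supermartingale inequality along the stopping line of first returns (with a truncation and Fatou to control lines that never return) and using that every first-return particle sits at $x$ and hence contributes exactly $f(x)$ to $W$, one obtains that the mean offspring number of this Galton--Watson process is at most $1$. A (sub)critical Galton--Watson process becomes extinct almost surely, so $x$ is visited only finitely often $\P_x$-almost surely, that is $\alpha(x)=0$; irreducibility upgrades this to $\alpha\equiv0$, i.e. transience.

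For ``only if'', suppose the chain is transient. Then the embedded return process cannot be supercritical, so the irreducible nonnegative matrix $M$ has spectral radius $\rho\le 1$. The superharmonic function is produced by potential theory: when $\rho<1$ the Green series converges and $f(y):=\sum_{n\ge0} m^{(n)}(\orig,y)$ is positive and satisfies $Mf=f-\mathbf 1\{\cdot=\orig\}\le f$; in the borderline case $\rho=1$ one instead takes the Perron--Frobenius eigenfunction, a positive $f$ with $Mf=\rho f\le f$, furnished by the Vere-Jones theory of irreducible nonnegative matrices.

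The main obstacle is the critical regime, where the expected number of visits $\sum_n m^{(n)}(x,x)$ is infinite while the process is still transient. A first-moment Borel--Cantelli bound is then useless, and the whole ``if'' direction hinges on replacing it by the almost-sure extinction of the \emph{critical} embedded Galton--Watson process; this needs that process to be genuinely branching (a non-degenerate offspring law), which is the delicate point and precisely why one must argue on the particle level rather than with first moments alone.
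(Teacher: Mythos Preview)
Your ``if'' direction is essentially the paper's argument: both use the weighted-mass supermartingale and the embedded Galton--Watson process of first returns, and both flag that the critical case needs a non-degenerate offspring law. The paper streamlines the supermartingale/Fatou step by first passing to a modified process \textbf{BMC*} in which the origin is \emph{freezing} (particles reaching $\orig$ stay there forever); this replaces your stopping-line argument by ordinary martingale convergence, since the frozen particles contribute exactly $\nu\cdot f(\orig)$ to the limit and one reads off $\E_\orig\nu\le 1$ directly.

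Your ``only if'' direction is genuinely different. The paper stays on the particle level: once transience gives $\E_\orig\nu\le 1$, it \emph{defines} $f(x):=\E_x\nu$ and checks by hand that $Mf(x)=\sum_{y\neq\orig}m(x,y)\,\E_y\nu+m(x,\orig)\cdot 1\le f(x)$, using $\E_\orig\nu\le 1$ in the last step. No spectral theory is needed; the superharmonic function is explicit and probabilistic. Your route instead passes through $\rho(M)\le 1$ and then invokes Vere-Jones/Pruitt for the eigenfunction---which is exactly how the paper later derives Theorem~\ref{thm:2} \emph{from} Theorem~\ref{thm:1} via Lemma~\ref{lem:1}. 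So you are reversing the paper's logical order; this is legitimate, but it makes Theorem~\ref{thm:1} depend on the analytic Lemma~\ref{lem:1} rather than stand on its own.

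Two concrete slips in your sketch. First, the Green function is in the wrong variable: with $f(y)=\sum_n m^{(n)}(\orig,y)$ one does \emph{not} get $Mf=f-\mathbf 1_{\{\orig\}}$, because $(Mf)(x)=\sum_y m(x,y)f(y)$ convolves on the wrong side; you need $f(y)=\sum_n m^{(n)}(y,\orig)$. Second, the implication ``embedded return process not supercritical $\Rightarrow$ $\rho\le 1$'' is asserted but not argued. It is true---via the first-return decomposition $G(\orig,\orig|z)=\bigl(1-F(\orig,\orig|z)\bigr)^{-1}$ together with $\E_\orig\nu=F(\orig,\orig|1)$, which forces the convergence radius of $G$ to be at least $1$---but this step is not immediate and should be spelled out. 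The paper's explicit choice $f=\E_\cdot\nu$ sidesteps both issues.
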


\begin{proof}
In analogy to \cite{gantert:04} and \cite{menshikov:97}, we
introduce the following modified version of the BMC. We fix some
site $\orig\in X,$ which stands for the origin of $X.$  The new
process is like the original BMC at time $n=1,$ but is different
for $n>1.$ After the first time step we conceive the origin as
{\it freezing}: if a particle reaches the origin, it stays there
forever without being ever substituted. We denote this new process
{\bf BMC*}. The first moment $M^*$ of BMC* equals $M$ except that
$m^*(\orig,\orig)=1$ and $m^*(\orig,x)=0~\forall x\neq \orig.$ Let
$\eta(n,\orig)$ be the number of frozen particles at position
$\orig$ at time $n.$ We define the random variable $\nu$ as
$$\nu:=\lim_{n\rightarrow\infty} \eta(n,\orig)
\in\N\cup\{\infty\},$$ and write $\E_x \nu$ for the expectation of
$\nu$ given that we start the process with one particle in $x.$

We first show that transience, i.e. $\alpha\equiv 0,$ implies
$\E_{\orig}\nu\leq 1$ and hence, due to the irreducibility of $M,$
that $\E_{x} \nu <\infty$ for all $x.$  We start the BMC in the
origin  $\orig.$ The key idea of the proof is to observe that the
total number of particles ever returning to $\orig$ can be
interpreted as the total number of progeny in a branching process
$(Z_n)_{n\geq 0}.$  Note that each particle has a unique ancestry
line which leads back to the starting particle at time $0$ at
$\orig.$ Let $Z_0:=1$ and $Z_1$ be the number of particles that
are the first particle in their ancestry line to return to
$\orig.$ Inductively we define $Z_n$ as the number of particles
that are the $n$th particle in their ancestry line to return to
$\orig.$ This defines a Galton-Watson process $(Z_n)_{n\geq 0}$
with offspring distribution $Z{\buildrel d \over =}Z_1.$  We have
$Z_1{\buildrel d \over =}\nu$ given that the process starts in the
origin $\orig.$ Furthermore,
$$\sum_{n=1}^\infty Z_n =\sum_{n=1}^\infty \sum_{i=1}^{\eta(n)} \mathbf{1}\{x_i(n)=\orig\}.$$
If $\alpha(\orig)=0,$ then $\sum_{n=1}^\infty Z_n<\infty$ a.s.,
hence $(Z_n)_{n\geq 0}$ is critical or subcritical and $\E_{\orig}
\nu=E[Z]\leq 1.$

In order to show the existence of a superharmonic function it
suffices now to check  that $f(x):=\E_x\nu>0$ satisfies inequality
(\ref{eq:1.1}). For $x$ such that $m(x,\orig)=0$ it is
straightforward to show that even equality holds in
(\ref{eq:1.1}). If $m(x,\orig)>0,$ we have
\begin{eqnarray*}
 f(x)=\E_x\nu &=&  \sum_{y\neq \orig}m(x,y) \E_y \nu
+ m(x,\orig) \cdot 1  \\
   &\geq&   \sum_y m(x,y) \E_y \nu \\
   &=& Mf(x),
\end{eqnarray*}
 since $\E_{\orig} \nu\leq 1.$

Conversely, we consider the  BMC* with origin $\orig$ and define
$$Q(n):=\sum_{i=1}^{\eta(n)} f(x_i(n)),$$
where $x_i(n)$ is the position of the $i$th particle at time $n.$
It turns out  that $Q(n)$ is a positive  supermartingale, so that
it converges a.s. to a random variable $Q_\infty.$ We refer to
\cite{menshikov:97} for the technical details.  Furthermore, we
have
\begin{equation}\label{eq:q}
\nu(\orig)\leq \frac{Q_\infty}{f(\orig)}
\end{equation} for a BMC* with origin $\orig$ started in $\orig.$
 We obtain using Fatou's Lemma
\begin{equation}\label{eq:1.11}
\E_{\orig}\nu\leq \frac{\E_{\orig}Q_\infty}{f(\orig)}\leq
\frac{\E_{\orig}Q(0)}{f(\orig)}=\frac{f(\orig)}{f(\orig)}=
1.\end{equation} Hence, the embedded Galton-Watson process
$(Z_n)_{n\geq 0}$ is critical or subcritical and dies out since
$\P_\orig(\nu(0)<1)>0.$

\end{proof}

Condition (\ref{eq:1.1}) in Theorem \ref{thm:1} suggests that the
spectral radius of the operator $M$ plays a crucial role in
finding a more explicit condition for transience. To pursue this
path let us briefly recall some known properties of irreducible
kernels and of their spectral radii. An operator
$K=\left(k(x,y)\right)_{x,y\in X}$ on $X$ is an irreducible kernel
if $k(x,y)\geq 0$ for all $x,y\in X$ and for all $x,y$ there is
some $l$ such that $k^{(l)}(x,y)>0,$ where
$K^l=\left(k^{(l)}(x,y)\right)_{x,y\in X}$ is  the $l$th
convolution of $K$ with itself.

\begin{defn}\label{def:green_fct}
The Green function of $K$ is the power series
$$G(x,y|z):=\sum_{n=0}^\infty k^{(n)} (x,y) z^n,~ x,y\in X,~
z\in\mathbb{C}.$$
\end{defn}

\begin{lem}\label{lem:2a}
For all $x,y\in X$ the power series $G(x,y|z)$ has the same finite
radius of convergence $R(K)$ given by
$$ R(K):=\left(\limsup_{n\rightarrow\infty}
\left(k^{(n)}(x,y)\right)^{1/n}\right)^{-1}<\infty$$
\end{lem}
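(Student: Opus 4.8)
The plan is to reduce everything to the elementary Cauchy--Hadamard formula and then extract the two nontrivial assertions. Since $G(x,y|z)=\sum_{n=0}^\infty k^{(n)}(x,y)\,z^n$ is an ordinary power series in $z$ with nonnegative coefficients, its radius of convergence is by Cauchy--Hadamard exactly $\bigl(\limsup_{n\to\infty}(k^{(n)}(x,y))^{1/n}\bigr)^{-1}$. Hence the whole statement reduces to two claims about the quantity $L(x,y):=\limsup_{n\to\infty}(k^{(n)}(x,y))^{1/n}$: that $L(x,y)>0$ (so that the radius is finite), and that $L(x,y)$ is independent of the pair $(x,y)$.

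For positivity I would work on the diagonal. By irreducibility there is some $l$ with $k^{(l)}(x,x)>0$, and concatenating loops based at $x$ gives $k^{(ml)}(x,x)\ge (k^{(l)}(x,x))^m$ for every $m$; this follows inductively from $k^{(2l)}(x,x)=\sum_z k^{(l)}(x,z)k^{(l)}(z,x)\ge (k^{(l)}(x,x))^2$. Taking $(ml)$-th roots along this subsequence yields $L(x,x)\ge (k^{(l)}(x,x))^{1/l}>0$, so that, once $L$ is shown to be constant, $R(K)\le (k^{(l)}(x,x))^{-1/l}<\infty$.

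For independence of $(x,y)$, fix two pairs $(x,y)$ and $(x',y')$. Using irreducibility, choose $p,q$ with $k^{(p)}(x',x)>0$ and $k^{(q)}(y,y')>0$. Prepending and appending these fixed segments to any path from $x$ to $y$ gives, for all $n$,
$$k^{(p+n+q)}(x',y')\ \ge\ k^{(p)}(x',x)\,k^{(n)}(x,y)\,k^{(q)}(y,y').$$
Raising to the power $1/(p+n+q)$ and letting $n\to\infty$, the two fixed factors tend to $1$ and $(p+n+q)/n\to 1$, so along the subsequence of indices of the form $p+n+q$ the left-hand side has $\limsup$ at least $L(x,y)$. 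Since a $\limsup$ over a subsequence is dominated by the full $\limsup$, we obtain $L(x',y')\ge L(x,y)$; exchanging the roles of the two pairs gives the reverse inequality, hence $L(x',y')=L(x,y)$.

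Combining the pieces, $L(x,y)$ equals one common positive value $L$ for every pair, and Cauchy--Hadamard then gives $R(K)=1/L<\infty$ together with the displayed formula. I expect the one delicate point to be the bookkeeping in the independence step, where one must check that shifting the exponent by the constant $p+q$ leaves the $\limsup$ of the $n$-th roots unchanged; this is precisely where the convergence $(p+n+q)/n\to 1$ and the fact that the fixed factors tend to $1$ are needed.
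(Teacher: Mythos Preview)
Your proof is correct and follows essentially the same approach as the paper: both derive the Harnack-type inequality $k^{(p+n+q)}(x',y')\ge k^{(p)}(x',x)\,k^{(n)}(x,y)\,k^{(q)}(y,y')$ from irreducibility to show independence of the pair, and both use concatenation of loops, $k^{(ml)}(x,x)\ge (k^{(l)}(x,x))^m$, for finiteness of $R(K)$. The only cosmetic difference is that the paper compares the power series $G(\cdot,\cdot|z)$ directly (multiplying through by $z^{n+p+q}$ and summing) rather than taking $n$-th roots and passing to the $\limsup$.
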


\begin{proof}
The fact that the power series defining the functions $G(x,y|z)$
all have the same radius of convergence follows from a system of
Harnack-type inequalities.  Due to the irreducibility of $K$ for
all  $x_1,x_2,y_1,y_2\in X$ there exist some $l_1,l_2\in\N$ such
that we have $k^{(l_1)}(x_1,x_2)>0$ and
 $k^{(l_2)}(y_2,y_1)>0.$  Thus
for every $n\in\N,$
$$k^{(n+l_1+l_2)}(x_1,y_1)\geq k^{(l_1)}(x_1,x_2) k^{(n)}(x_2,y_2)
k^{(k_2)}(y_2,y_1).$$ Consequently, for every $z\in\R^+$
\begin{equation}\label{eq:harnack}
G(x_1,y_1|z)\geq k^{(l_1)}(x_1,x_2) k^{(l_2)}(y_2,y_1)z^{l_1+l_2}
G(x_2,y_2|z).
\end{equation}
It follows that the radius of convergence of $G(x_1,y_1|z)$ is at
least that of $G(x_2,y_2|z).$ The fact that $R(K)<\infty$
 follows from the irreducibility of $K$: let
$l\in\N$ such that $k^{(l)}(x,x)=\eps>0$ then $k^{(nl)}(x,x)\geq
\eps^n$ for every $n\geq 0.$
\end{proof}

\begin{defn}\label{def:spectr_rad}
 The spectral  radius of an irreducible kernel $K$ is defined as
\begin{equation}
\rho(K):=\limsup_{n\rightarrow\infty}
\left(k^{(n)}(x,y)\right)^{1/n}>0.
\end{equation}
\end{defn}

 The following characterization of the spectral radius
in terms of $t$-superharmonic functions is crucial for our
classification:
\begin{lem}\label{lem:1}
$$\rho(K)=\min\{t>0:~\exists\, f(\cdot)>0\mbox{ such that }Kf\leq t f\}$$
\end{lem}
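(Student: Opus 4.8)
The plan is to prove the two inequalities separately. Write $S:=\{t>0:\ \exists\, f>0 \text{ with } Kf\le tf\}$; I will show $\rho(K)\le\inf S$ and $\rho(K)\in S$, which together give $\rho(K)=\min S$. Throughout I use $R(K)=1/\rho(K)$ from \lemref{lem:2a}, and the fact that $\rho(K)>0$.

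For the lower bound, take any $t\in S$ with witness $f>0$. Iterating $Kf\le tf$ gives $K^nf\le t^n f$ for all $n$, hence for fixed $x,y$ the entrywise bound $k^{(n)}(x,y)f(y)\le (K^nf)(x)\le t^n f(x)$. Rearranging and taking $n$-th roots yields $\bigl(k^{(n)}(x,y)\bigr)^{1/n}\le t\,\bigl(f(x)/f(y)\bigr)^{1/n}$; since $f(x),f(y)$ are fixed positive numbers, the second factor tends to $1$, so passing to $\limsup_n$ gives $\rho(K)\le t$. As $t\in S$ was arbitrary, $\rho(K)\le\inf S$.

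For the reverse inclusion I would first produce $t$-superharmonic functions for every $t>\rho(K)$ and then pass to the limit. Fix a reference point $y_0\in X$. For $t>\rho(K)$, i.e. $z:=1/t<R(K)$, the Green function converges and I set $f_t(x):=G(x,y_0|z)$. Using $\sum_x k(w,x)k^{(n)}(x,y_0)=k^{(n+1)}(w,y_0)$ one computes directly $Kf_t(w)=\frac1z\bigl(f_t(w)-\mathbf 1\{w=y_0\}\bigr)\le t\,f_t(w)$, and $f_t>0$ by irreducibility. Thus $(\rho(K),\infty)\subseteq S$, and it remains to let $t\downarrow\rho(K)$. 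Normalising at a fixed $x_0$ by $\hat f_t:=f_t/f_t(x_0)$, the Harnack inequalities \eqref{eq:harnack} bound $\hat f_t(x)$ between two positive constants depending on $x$ but not on $t$, since they control $G(x,y_0|z)$ against $G(x_0,y_0|z)$ in both directions. Because $X$ is countable, a diagonal argument extracts $t_k\downarrow\rho(K)$ with $\hat f_{t_k}\to f$ pointwise, where $f(x_0)=1$ and $f>0$ by the same bounds. Applying Fatou's lemma to the nonnegative kernel $K$ gives $Kf(x)=\sum_y k(x,y)f(y)\le\liminf_k K\hat f_{t_k}(x)\le\liminf_k t_k\hat f_{t_k}(x)=\rho(K)f(x)$, so $f$ witnesses $\rho(K)\in S$.

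The main obstacle is this last limiting step: one must ensure the normalised functions neither blow up nor degenerate to $0$, and that the limit may be interchanged with the possibly infinite sum defining $K$. Both are handled by the uniform two-sided Harnack control together with Fatou, but the delicate point is the $\rho(K)$-recurrent case, where $G(x,y_0|R(K))=\infty$ so the explicit formula at $z=R(K)$ is unavailable and the approximation from $t>\rho(K)$ is genuinely needed.
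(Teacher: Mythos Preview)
Your proof is correct and follows essentially the same approach as the paper's: iterate $Kf\le tf$ for the lower bound, use the normalized Green functions $G(\cdot,\orig\,|\,1/t)$ as $t$-superharmonic functions for $t>\rho(K)$, and then pass to the limit $t\downarrow\rho(K)$. The only difference is cosmetic: the paper invokes compactness of the base $B(K,t)=\{f:Kf\le tf,\ f(\orig)=1\}$ in the topology of pointwise convergence, whereas you spell out that compactness explicitly via the Harnack bounds, a diagonal extraction, and Fatou's lemma.
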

\begin{proof}
We sketch the proof and refer for more details to \cite{pruitt} or
\cite{woess}. We write $S^+(K,t)$ for the collection of all
positive functions $f$ satisfying $Kf\leq t f.$  Observe that a
function in $S^+(K,t)$ is either strictly positive or constant
equal to $0.$ In order to construct a base of the cone $S^+(K,t)$
we fix a reference point $\orig\in X$ and define
$$B(K,t):=\{f\in S^+(K,t): f(\orig)=1\}.$$
If there is some $f\neq 0$ in $S^+(K,t)$ then $\rho(K)\leq t$
since
$$ k^{(n)}(x,x) f(x) \leq K^n f(x)\leq t^n f(x).$$ On the other
hand, observe that for $t> \rho(K)$ we have
$$f(x):=\frac{G(x,\orig|1/t)}{G(\orig,\orig|1/t)}\in B(K,t).$$
The fact that $B(K,\rho(K))=\bigcap_{t>\rho(K)} B^+(K,t)\neq
\emptyset$ follows now from the observation that for all
$t>\rho(K)$ the base $B(K,t)$ is compact in the topology of
pointwise convergence.
\end{proof}

We now obtain as an immediate consequence of Theorem \ref{thm:1} a
classification in terms of the spectral radius of $M.$
\begin{thm}\label{thm:2}
The BMC $(X,\bm\omega)$ is transient if and only if $\rho(M)\leq
1.$
\end{thm}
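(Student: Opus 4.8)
The plan is to derive the statement directly by combining Theorem \ref{thm:1} with the variational characterization of the spectral radius in Lemma \ref{lem:1}. Theorem \ref{thm:1} says that transience is equivalent to the existence of a positive function $f$ with $Mf \leq f$, which is exactly the condition $Mf \leq t f$ evaluated at the threshold value $t = 1$. Lemma \ref{lem:1} identifies $\rho(M)$ as the minimal $t > 0$ for which such a positive $t$-superharmonic function exists. Thus the whole argument reduces to reading off the two directions from these two facts, and no new probabilistic input is needed.

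For the ``only if'' direction I would assume the BMC is transient. By Theorem \ref{thm:1} there is a positive $f$ with $Mf \leq f$, i.e.\ $Mf \leq 1 \cdot f$. This exhibits $t = 1$ as an element of the set $\{t > 0 : \exists\, f(\cdot) > 0 \text{ such that } Mf \leq t f\}$, whose minimum equals $\rho(M)$ by Lemma \ref{lem:1}. Since $1$ belongs to this set, its minimum satisfies $\rho(M) \leq 1$.

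For the ``if'' direction I would assume $\rho(M) \leq 1$. The crucial point is that Lemma \ref{lem:1} provides a genuine minimum, not merely an infimum: its proof establishes $B(M, \rho(M)) \neq \emptyset$, so there exists an actual positive function $f$ attaining the bound, $Mf \leq \rho(M) f$. Because $\rho(M) \leq 1$ and $f > 0$ everywhere on $X$, this yields $Mf \leq \rho(M) f \leq f$, so $f$ is a positive superharmonic function in the sense of (\ref{eq:1.1}). Applying Theorem \ref{thm:1} in the reverse direction, the existence of this $f$ forces the BMC $(X,\bm\omega)$ to be transient.

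I expect no serious obstacle, since the result is essentially a translation between the two established criteria. The one place demanding care is the ``if'' direction, where one must invoke the \emph{attainment} of the minimum in Lemma \ref{lem:1} to produce a genuine witness function at the critical level $t = \rho(M)$; if that characterization only delivered an infimum, the boundary case $\rho(M) = 1$ would require a separate limiting argument to pass from $t$-superharmonic functions with $t > 1$ to a $1$-superharmonic one.
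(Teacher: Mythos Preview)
Your proposal is correct and matches the paper's approach: the paper states Theorem \ref{thm:2} as ``an immediate consequence of Theorem \ref{thm:1}'' combined with Lemma \ref{lem:1}, and you have spelled out precisely that derivation, including the correct observation that the attainment of the minimum in Lemma \ref{lem:1} is what handles the critical case $\rho(M)=1$.
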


We can directly show that $\rho(M)< 1$ implies transience. Just
observe that   $G(x,y|1)<\infty$   equals the expected number of
particles visiting $y$ in a BMC started in $x$  and transience
follows immediately. Conversely, the fact that $\rho(M)>1$ implies
the recurrence of the BMC can be also seen by dint of the
interpretation as a general branching process and the fact that
the spectral radius of an infinite irreducible kernel $\rho(M)$
can be approximated by the spectral radii of finite irreducible
kernels. A subset $Y\subset X$ is called irreducible (with respect
to $M$) if the
 operator
\begin{equation}
 M_Y=(m_Y(x,y))_{x,y\in Y}
\end{equation}
defined by
$m_Y(x,y):=m(x,y)$ for all $x,y\in Y$ is irreducible. Notice that
in this case $\rho(M_Y)$ is just the Perron-Frobenius eigenvalue
of $M_Y.$  We find
\begin{equation}\label{eq:specapprox}
\rho(M)=\sup_Y \rho(M_Y),
\end{equation}
where the supremum is over finite and irreducible subsets
$Y\subset X.$ Therefore, if $\rho(M)>1$ then there exists a finite
and irreducible $Y$ such that $\rho(M_Y)>1.$ Now, let us consider
only particles in $Y$ and neglect all the others. We obtain a
supercritical multi-type Galton-Watson process with first moments
$M_Y$ that survives with positive probability since $\rho(M_Y)>1,$
compare with \cite{harris:63}. The subset $Y$ takes the position
of \emph{recurrent seeds} in  \cite{comets:05}. In contrast to
\cite{comets:05} we don't need to construct the seeds explicitly
but use Equation (\ref{eq:specapprox}) in order to make the
criterion explicit for branching random walk in random
environment.

\subsection{The critical BMC is transient}\label{sub:critical}
We have already mentioned in the introduction that the model we
study in this paper is more general than the model where branching
and movement are independent. Let us consider a BMC $(X,P,\mu)$
with independent branching and movement. Here $(X,P)$ is an
irreducible and infinite Markov chain in discrete time and
$$\mu(x)=\left(\mu_k(x)\right)_{k\geq 1}$$ is a sequence of non-negative numbers
satisfying
$$\sum_{k=1}^\infty \mu_k(x)=1 \mbox{ and } m(x):=\sum_{k=1}^\infty k \mu_k(x)<\infty.$$
We define the BMC $(X,P,\mu)$ with underlying Markov chain $(X,P)$
and branching distribution $\mu=(\mu(x))_{x\in X}$ following
\cite{menshikov:97}. At time $0$ we start with one particle in an
arbitrary starting position $x\in X.$   At each time each particle
in position  $x$ splits up according to $\mu(x)$ and the offspring
particles move according to $(X,P).$ At any time, all particles
move and branch independently of the other particles and the
previous history of the process.  Now, let $\bm\omega$ be a
combination of multi-nomial distributions,
$$\omega_x(v):= \sum_k \mu_k(x) Mult\left(k; p(x,y), y\in\{y:p(x,y)>0\}
\right),$$ and hence $(X,\bm\omega)$ has the same distribution as
$(X,P,\mu).$ We immediately obtain the result of
 Theorem 3.2 in \cite{gantert:04}.

\begin{thm}
A BMC $(X,P,\mu)$ with constant mean offspring, i.e.
$m(x)=m~\forall x,$ is transient if and only if $m\leq 1/\rho(P).$
\end{thm}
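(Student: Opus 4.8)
The plan is to reduce this theorem about the BMC $(X,P,\mu)$ with constant mean offspring to the general spectral-radius criterion of Theorem \ref{thm:2}. The key observation is that when branching and movement are independent, the first-moment operator $M$ factors nicely: a particle in $x$ produces on average $m(x)=m$ offspring, each of which moves according to the kernel $P$. Hence the expected number of offspring sent from $x$ to $y$ is
$$m(x,y)=\sum_k k\,\mu_k(x)\,p(x,y)=m(x)\,p(x,y)=m\cdot p(x,y).$$
I would first verify this identity directly from the definition of $M$ applied to the multinomial combination $\omega_x$ given just above the statement, using that the mean of the $y$-th coordinate of a $\mathrm{Mult}(k;p(x,\cdot))$ vector is $k\,p(x,y)$. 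With constant mean this gives the clean operator identity $M=m\,P$.

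Having established $M=mP$, the main step is to compute the spectral radius. Since $P$ is an irreducible kernel and $m>0$ is a constant scalar, the defining $\limsup$ in Definition \ref{def:spectr_rad} yields
$$\rho(M)=\limsup_{n\to\infty}\bigl(m^{(n)}(x,y)\bigr)^{1/n}=\limsup_{n\to\infty}\bigl(m^n\,p^{(n)}(x,y)\bigr)^{1/n}=m\,\rho(P),$$
because $M^n=m^nP^n$ so $m^{(n)}(x,y)=m^n\,p^{(n)}(x,y)$, and the factor $m^n$ contributes exactly $m$ after taking $n$-th roots. I would note that the irreducibility of $M$ required by the General Assumption follows from the irreducibility of $P$ together with $m>0$, so that $\rho(P)$ is well-defined and positive by Lemma \ref{lem:2a} and Definition \ref{def:spectr_rad}.

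With these two facts in hand, Theorem \ref{thm:2} applies immediately: the BMC is transient if and only if $\rho(M)\le 1$, which by the computation $\rho(M)=m\,\rho(P)$ is equivalent to $m\,\rho(P)\le 1$, i.e.\ $m\le 1/\rho(P)$. This is exactly the claimed criterion, completing the proof.

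I do not expect any genuine obstacle here, as the result is essentially a corollary of Theorem \ref{thm:2} once the factorization is in place. The only point requiring mild care is the interchange of the constant factor $m$ with the $\limsup$ of $n$-th roots; this is routine since $m^{1/n}\to 1$ has no effect on the $\limsup$ while the product $m^n\,p^{(n)}(x,y)$ factors exactly. A secondary detail worth stating explicitly is why $\rho(P)>0$ and finite, which again follows from Lemma \ref{lem:2a} applied to the irreducible kernel $P$.
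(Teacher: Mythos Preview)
Your proposal is correct and follows exactly the route the paper intends: the paper presents this theorem as an immediate consequence of Theorem \ref{thm:2} after encoding $(X,P,\mu)$ as a BMC $(X,\bm\omega)$ via the multinomial construction, and you have simply spelled out the details, namely $m(x,y)=m\,p(x,y)$ and hence $\rho(M)=m\,\rho(P)$. One trivial remark: in your last paragraph the aside about $m^{1/n}\to 1$ is unnecessary, since $(m^n p^{(n)}(x,y))^{1/n}=m\,(p^{(n)}(x,y))^{1/n}$ exactly, with no limiting argument needed for the scalar factor.
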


The general (discrete) BMC can be used to study certain
continuous-time branching random walks. Let us consider the
branching random walk studied for example in \cite{schinazi:93},
\cite{pemantle:01} and \cite{stacey:03}. Let $G=(V,E)$ be a
locally bounded and connected graph with vertex set $V$ and edges
$E.$ The branching random walk $(G,\lambda)$ on the graph $G$ is a
continuous-time Markov process whose state space is a suitable
subset of $\N^V.$ The process is described through the number
$\eta(t,v)$ of particles at vertex $v$ at time $t$ and evolves
according the following rules: for each $v\in V$
\begin{eqnarray*}
  \eta(t,v) &\rightarrow & \eta(t,v)-1 \mbox{ at rate } \eta(t,v) \\
  \eta(t,v) &\rightarrow & \eta(t,v)+1 \mbox{ at rate } \lambda \sum_{u: u\sim
  v}\eta(t,u),
\end{eqnarray*}
where $\lambda$ is a fixed parameter and $u\sim v$ denotes that
$u$ is a neighbor of $v.$ In words, each particle dies at rate $1$
and gives birth to new particles at each neighboring vertex at
rate $\lambda.$ Let $o\in V$ be some distinguished vertex of $G$
and denote $\P_\orig$ the probability measure of the process
started with one particle at $\orig$ at time $0.$ One says the
branching random walk $(G,\lambda)$ survives locally if
$$\P_\orig\left(\eta(t,\orig)=0 \mbox{ for sufficiently large }
t\right)<1$$ or equivalently

$$\P_\orig\left( \exists (t_n)_{n\in\N}:~ t_n\to\infty \mbox{ and }
\eta(t_n,\orig)>0    \right)=0.$$

Since this is equivalent to the fact that the probability that
infinitely many particles jump to $\orig$ is zero, the question of
local survival can be answered with an appropriate BMC. To this
end let $(V,\bo)$ be any BMC with mean substitution
$m(u,v):=\lambda$ if $u\sim v$ and $m(u,v):=0$ otherwise. Hence,
$M=\lambda \cdot A,$ where $A$ is the adjacency matrix of the
graph $G=(V,E).$ Theorem \ref{thm:2} gives that the BMC $(V,\bo)$
is transient if and only if $\lambda\leq 1/\rho(A).$

It is now straightforward to obtain the following result that
strengthen Proposition 2.5 and Lemma 3.1 in \cite{pemantle:01}
where the behavior in the critical case, $\lambda=1/\rho(A),$ was
not treated.

\begin{cor}
The branching random walk $(G,\lambda)$ survives locally if and
only if $\lambda> 1/\rho(A).$
\end{cor}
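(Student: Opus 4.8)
The plan is to realize the continuous-time branching random walk through an embedded discrete-time BMC and then to invoke Theorem~\ref{thm:2}. As noted in the text, local survival of $(G,\lambda)$ is equivalent to the event that, with positive probability, infinitely many particles are ever born at (equivalently, jump to) $\orig$. It therefore suffices to encode the births of the continuous-time process as the visits of the discrete BMC $(V,\bo)$ fixed above, and to show that this BMC is recurrent in the sense of (\ref{eq:1}) precisely when $\lambda > 1/\rho(A)$.

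First I would make the embedding explicit via the genealogy. Starting the continuous-time process with one particle at $u$, I record for each particle the particles it creates over its entire lifetime together with the parent--child relation; reading this genealogical tree generation by generation yields a discrete-time BMC on $V$. A particle sitting at $u$ has an exponential lifetime $T$ of parameter $1$ and, during that lifetime, emits offspring at a neighbour $v$ according to a Poisson process of rate $\lambda$; hence the expected number of offspring it places at $v$ equals $\E[\lambda T] = \lambda$, while non-neighbours receive none. Thus the embedded BMC has first moment $m(u,v) = \lambda\,\mathbf{1}\{u \sim v\}$, that is $M = \lambda A$, matching the kernel fixed in the text. By construction a visit of this BMC to $\orig$ is exactly a birth at $\orig$ in the continuous-time process, so local survival corresponds to this BMC being recurrent.

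With the embedding in hand the conclusion is immediate from Theorem~\ref{thm:2}: the BMC is transient if and only if $\rho(M) \le 1$, and since $\rho(\lambda A) = \lambda\,\rho(A)$ this reads $\lambda \le 1/\rho(A)$. Consequently the BMC is recurrent, and therefore $(G,\lambda)$ survives locally, exactly when $\lambda > 1/\rho(A)$.

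The main obstacle is the precise equivalence between local survival and recurrence of the embedded BMC, i.e.\ between ``$\orig$ is occupied at arbitrarily large times'' and ``infinitely many births occur at $\orig$''. One direction is clear, since particles at $\orig$ have finite lifetimes, so perpetual occupation forces infinitely many arrivals; for the converse one must rule out that infinitely many births at $\orig$ accumulate before a finite explosion time, which is where local boundedness of $G$ enters. I would also note that the genealogical branching process may die out globally, in apparent conflict with the standing ``survives forever'' convention of the BMC model; this is harmless, because transience is governed solely by the first moment $M$ through $\rho(M)$ by Theorem~\ref{thm:2}, a quantity insensitive both to the precise offspring laws and to global extinction.
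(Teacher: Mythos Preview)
Your proposal is correct and follows the same route as the paper: reduce local survival to recurrence of a discrete BMC with first-moment kernel $M=\lambda A$ and invoke Theorem~\ref{thm:2}. The paper in fact gives no separate proof of the corollary beyond the paragraph preceding it; you supply more detail than the paper does, making the genealogical embedding explicit (with the $\E[\lambda T]=\lambda$ computation) and flagging the two technical points---non-explosion and the ``at least one offspring'' convention---that the paper passes over in silence.
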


\section{BRWRE on $\Z^d$}
We turn now to branching random walk in random environment (BRWRE)
on $\Z^d$ and see how the results of the preceding section apply
to this model. First, we define the model.
\subsection{The model}
Let $\U\subset\Z^d$ be a finite generator of the group $\Z^d.$
Define
$$\V:=\left\{ v=(v_y,~y\in\U): v_y\in\N,~\sum_{y\in\U} v_y\geq
1\right\}.$$  Furthermore, let us define the probability space
that describes the random environment. To this end, let
$$\M:=\left\{\omega=\left(\omega(v),~v\in \V\right):~ \omega(v)\geq 0
\mbox{ for all } v\in\V,~\sum_{v\in\V}\omega(v)=1 \right\}.$$ be
the set of all probability measures $\omega$ on $\V$ and let $Q$
be a probability measure on $\M.$ For each $x\in\Z^d$ a random
element $\omega_x\in\M$ is chosen according to $Q$ independently.
Let $\Theta$ be the corresponding product measure with
one-dimensional marginal $Q$ and denote $\K:=supp(Q)$ the support
of the marginal. The collection $\bm\omega = (\omega_x,~x\in\Z^d)$
is called a realization of  the random environment $\Theta.$ Each
realization $\bm\omega$ defines a BMC $(\Z^d,\bm\omega)$ and we
denote  $\P_{\bm\omega}$ the corresponding probability measure.
Throughout the paper we will assume the following condition on $Q$
that ensures the irreducibility of our process:
\begin{equation}\label{cond:irred}
 Q\left\{ \sum_y \omega_0(y)>\eps \quad \forall y\in \S
 \right\}=1 \quad \mbox{ for some }\eps>0,
\end{equation}
where $\S\subset\U$ is some  generating set of $\Z^d.$ For
example, we can choose $\S:=\{\pm e_i, 1\leq i\leq d\},$ where
$e_i$ is  the $i$th coordinate vector of $\Z^d.$ The uniform
condition in (\ref{cond:irred}) is used for Lemma \ref{lem:3} and
Lemma \ref{lem:4} where we need that the BMC $(\Z^d,\bs)$ is
irreducible for all realization $\bs=(\sigma_x)_{x\in \Z^d}$ with
$\sigma_x\in\mathcal{K}.$

\subsection{Transience or strong recurrence}
Due to condition (\ref{cond:irred}) $\Theta$-almost every
$\bm\omega$ defines an irreducible matrix $M_{\bo}.$ Hence, with
Lemma \ref{lem:2a}
$$\rho(M_{\bo})=\limsup_{n\to\infty}
\left({m_{\bo}}^{(n)}(x,x)\right)^{1/n},\quad \forall x\in \Z^d.$$

Furthermore, we have that the translation $\{T_z:~ z\in \Z^d\}$
acts ergodically as a measure preserving transformation on our
environment. Together with the fact that $\limsup_{n\to\infty}
\left({m_{\bo}}^{(n)}(x,x)\right)^{1/n}$ does not depend on $x$
this implies that $\log \rho(M_{\bo})$ is equal to a constant for
$\Theta$-a.a. realizations $\bo.$ Eventually, $\rho=\rho(M_{\bo})$
for $\Theta$-a.a. realizations $\bo$ and some $\rho,$ that we call
the spectral radius of the BRWRE. Together with Theorem
\ref{thm:2} this immediately implies that the BRWRE is either
transient for $\Theta$-a.a. realizations or recurrent for
$\Theta$-a.a. realizations. We have even the stronger result,
compare with \cite{comets:05}:
\begin{thm}\label{thm:0-1}
We have either
\begin{itemize}
    \item for $\Theta$-a.a. realizations $\bo$ the BRWRE is strongly
    recurrent:
    $$ \alpha(\bo,x):=\P_{\bo,x}\left( \sum_{n=1}^\infty
\sum_{i=1}^{\eta(n)}\mathbf{1}\{x_i(n)=x\}=\infty\right)=1\quad
\forall x\in X,\mbox{ or}$$
    \item for $\Theta$-a.a. realizations $\bo$ the BRWRE is transient:
    $$ \P_{\bo,x}\left( \sum_{n=1}^\infty
\sum_{i=1}^{\eta(n)}\mathbf{1}\{x_i(n)=x\}=\infty\right)=0\quad
\forall x\in X.$$
\end{itemize}
\end{thm}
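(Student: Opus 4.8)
The plan is to prove a zero-one law: for $\Theta$-a.a. realizations the BRWRE is transient, and for $\Theta$-a.a. realizations it is recurrent (this dichotomy was already established via the constancy of $\rho(M_{\bo})$ and Theorem \ref{thm:2}), and moreover that the recurrent case is \emph{strongly} recurrent $\Theta$-a.s. The only genuinely new content of \thmref{thm:0-1} beyond what precedes it is therefore the upgrade from ``$\alpha(\bo,x)>0$ for some $x$'' (plain recurrence) to ``$\alpha(\bo,x)=1$ for all $x$'' (strong recurrence) on the recurrent event, plus the measurability/translation-invariance bookkeeping needed to make the $\Theta$-a.s. statements legitimate. Since this is one of the two places the author says they really work on the particle level, I would carry out the argument directly with the particle system rather than through superharmonic functions.

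First I would fix attention on the recurrent regime and show $\rho > 1$ forces $\alpha(\bo,x)=1$ for a.a. $\bo$. The engine is the spectral approximation \eqref{eq:specapprox}: $\rho(M_{\bo})=\sup_Y \rho(M_{Y})>1$ gives a \emph{finite, irreducible} window $Y\subset\Z^d$ with $\rho(M_{Y})>1$; thanks to the uniform ellipticity condition \eqref{cond:irred} this window can be taken to be a fixed-shape box that is irreducible for \emph{every} environment with $\sigma_x\in\K$, so $Y$ may be chosen uniformly in $\bo$. Restricting the BMC to particles currently inside $Y$ yields a supercritical multitype Galton--Watson process (first moments $M_Y$, $\rho(M_Y)>1$), which survives with positive probability by \cite{harris:63}. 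The next step is to turn ``survives with positive probability from one visit to $Y$'' into ``returns to $x$ infinitely often with probability one.'' Here I would use the branching structure: on survival, the restricted GW process produces an unbounded family of particles inside the finite set $Y$, and by irreducibility of $M_{Y}$ every site of $Y$ (in particular $x$, after translating the window to contain $x$) is hit by infinitely many particles with positive probability; then a standard second-moment or branching-recursion argument promotes this positive probability to $1$, because infinitely many independent subtrees each get their own chance to reseed. This is where the independence of substitutions across particles and the ``survive forever'' convention $\sum_y v_y\geq 1$ pay off.

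The main obstacle I anticipate is precisely this promotion of a positive hitting probability to probability one, i.e. establishing $\alpha(\bo,x)\in\{0,1\}$ in the general (non-independent branching/movement) setting, since the paper explicitly warns that the usual dichotomy ``$\alpha=1$ or $\alpha=0$'' \emph{fails} for general BMC and only survives in the BRWRE case. The resolution must exploit something special to BRWRE that is absent for arbitrary BMC: the homogeneity of $\Z^d$ together with the i.i.d.\ environment. Concretely I would argue that, conditioned on recurrence, the process almost surely produces infinitely many disjoint supercritical sub-populations on translated copies of the window $Y$; each independently reseeds $x$ with a probability bounded below by a deterministic constant $c>0$ (uniform by \eqref{cond:irred}), and a Borel--Cantelli / branching-process argument along these infinitely many independent trials forces infinitely many returns to $x$ with conditional probability one. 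Thus on the recurrent event $\alpha(\bo,x)=1$ identically in $x$.

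Finally I would assemble the $\Theta$-a.s. statement. Measurability of $\bo\mapsto\alpha(\bo,x)$ and of $\bo\mapsto\rho(M_{\bo})$ is routine from the explicit series/\text{limsup} formulas. The event $\{\rho(M_{\bo})\le 1\}$ is translation invariant and, by ergodicity of the shifts $\{T_z\}$ on $(\M^{\Z^d},\Theta)$ established just before the theorem, has $\Theta$-measure $0$ or $1$; on its complement the previous paragraphs give strong recurrence. Combining: either $\Theta(\rho\le 1)=1$, giving transience $\Theta$-a.s. via \thmref{thm:2}, or $\Theta(\rho>1)=1$, giving $\alpha(\bo,x)=1$ for all $x$ for $\Theta$-a.a.\ $\bo$. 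I would close by noting that independence of $\alpha$ from $x$ under a fixed $\bo$ follows from the irreducibility remark recorded after the definition of recurrence, so the ``$\forall x$'' quantifier is justified. The bulk of the writing effort will go into the second and third steps; the measurability and ergodicity parts are short.
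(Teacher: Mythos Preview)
Your proposal is correct and, in its third paragraph, lands on exactly the argument the paper uses: from $\rho>1$ and \eqref{eq:specapprox} extract a finite supercritical window, then exploit the i.i.d.\ structure of the environment to produce infinitely many independent translated ``seeds'' whose survival probabilities are bounded away from zero, and conclude $\alpha(\bo,x)=1$ by Borel--Cantelli (the paper defers the construction of this sequence of independent Galton--Watson processes to \cite{comets:05} and \cite{mueller06}). The paper's write-up is much terser than yours and skips your second-paragraph detour through a direct second-moment/branching-recursion promotion inside a single window---it goes straight to the seed argument---so you can safely drop that intermediate attempt.
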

\begin{proof}
It remains to show that $\rho>1$ implies $\alpha(\bo,x)=1$ for all
$x\in X.$ For $\Theta$-a.a. $\bo$ there exists some $Y\subset\Z^d$
such that $\rho(M_{\bo_Y})>1$ and the corresponding multi-type
Galton-Watson process is supercritical and survives with positive
probability. We start the process in $x\in X.$ Since the random
environment is iid, it is easy to construct a sequence of
independent multi-type Galton-Watson processes whose extinction
probability are bounded away from $1,$ we refer to
\cite{comets:05} and \cite{mueller06} for more details. At least
one of these processes survives and infinitely many particles
visit the starting position $x,$ i.e. $\alpha(\bo,x)=1.$
\end{proof}

\subsection{The spectral radius of BRWRE and the transience criterion}

We first give the transience criterion.

\begin{thm}\label{thm:3}
The BRWRE is transient for $\Theta$-a.a. realizations if
\begin{equation}\label{eq:cond}
\sup_{m\in\hat{\mathcal{K}}} \inf_{\theta\in\R^d} \left( \sum_s
e^{\langle\theta, s\rangle} m(s)\right)\leq 1.
\end{equation}
Otherwise it is strongly recurrent for $\Theta$-a.a. realizations.
\end{thm}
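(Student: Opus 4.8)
The plan is to turn the dichotomy into an exact evaluation of the almost sure spectral radius and then to identify it with the left-hand side of (\ref{eq:cond}). Write $\phi_m(\theta):=\sum_{s}e^{\langle\theta,s\rangle}m(s)$ for $m\in\hat{\mathcal K}$, $\theta\in\R^d$, and $\Phi:=\sup_{m\in\hat{\mathcal K}}\inf_{\theta}\phi_m(\theta)$. By the discussion preceding \thmref{thm:0-1} the number $\rho=\rho(M_{\bo})$ is $\Theta$-a.s.\ constant, and \thmref{thm:2} says $(\Z^d,\bo)$ is transient iff $\rho\le1$; combined with the $0$--$1$ law of \thmref{thm:0-1} the theorem follows once I establish the explicit identity $\rho=\Phi$. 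The whole argument rests on the structure of $(m,\theta)\mapsto\phi_m(\theta)$: it is affine in $m$ on the convex compact set $\hat{\mathcal K}$ and log-convex in $\theta$, and condition (\ref{cond:irred}) forces $\phi_m(\theta)\to\infty$ as $|\theta|\to\infty$, so Sion's minimax theorem applies and the infimum over $\theta$ is attained. This yields a saddle point $(m^{\ast},\theta^{\ast})$ with
\[
\phi_{m^{\ast}}(\theta)\ \ge\ \Phi=\phi_{m^{\ast}}(\theta^{\ast})\ \ge\ \phi_m(\theta^{\ast})\qquad(m\in\hat{\mathcal K},\ \theta\in\R^d),
\]
which I will use in both directions.

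For the upper bound $\rho\le\Phi$ (the transient half) I would feed exponential test functions into \lemref{lem:1}. For $\Theta$-a.a.\ $\bo$ every site has mean offspring vector $m_x:=(m_{\bo}(x,x+s))_{s}\in\hat{\mathcal K}$, and $f_\theta(x):=e^{\langle\theta,x\rangle}$ satisfies $M_{\bo}f_\theta(x)=\phi_{m_x}(\theta)f_\theta(x)\le\big(\sup_{m\in\hat{\mathcal K}}\phi_m(\theta)\big)f_\theta(x)$; hence $f_\theta\in S^{+}(M_{\bo},t)$ with $t=\sup_{m\in\hat{\mathcal K}}\phi_m(\theta)$, and the easy half of \lemref{lem:1} gives $\rho\le\sup_{m\in\hat{\mathcal K}}\phi_m(\theta)$. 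Minimising over $\theta$ and using the minimax identity gives $\rho\le\inf_\theta\sup_{m\in\hat{\mathcal K}}\phi_m=\Phi$.

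For the lower bound $\rho\ge\Phi$ (the recurrent half) I would exhibit a deterministic seed of spectral radius $\Phi$ and realise it in the i.i.d.\ field. Decompose $m^{\ast}=\sum_i\lambda_i\,m(\omega_i)$ with $\omega_i\in\mathcal K$, $\lambda_i>0$ (Carath\'eodory). The saddle inequalities force $\phi_{m(\omega_i)}(\theta^{\ast})=\Phi$ for each participating $i$ (so $\theta^{\ast}$ equalises the tilted row sums) and $\nabla_\theta\phi_{m^{\ast}}(\theta^{\ast})=0$ (zero average drift). Let $\bs$ be a spatially homogeneous arrangement of the $\omega_i$ at densities $\lambda_i$. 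Conjugating the irreducible operator $M_{\bs}$ by $\mathrm{diag}(e^{\langle\theta^{\ast},x\rangle})$ leaves the return entries $M_{\bs}^{(n)}(x,x)$, hence $\rho(M_{\bs})$, unchanged and produces a kernel all of whose row sums equal $\Phi$; thus $P:=\Phi^{-1}(\text{conjugate})$ is a stochastic kernel with average drift $\Phi^{-1}\nabla_\theta\phi_{m^{\ast}}(\theta^{\ast})=0$. Since diagonal conjugation commutes with restriction to a box, $\rho(M_{\bs,Y})=\Phi\,\rho(P_Y)$, so (\ref{eq:specapprox}) applied to $P$ gives $\rho(M_{\bs})=\Phi\,\rho(P)$. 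Because $\bs$ takes values in $\mathcal K=\mathrm{supp}(Q)$ and the environment is i.i.d., arbitrarily large boxes carrying a configuration arbitrarily close to $\bs$ occur for $\Theta$-a.a.\ $\bo$ (Borel--Cantelli over disjoint boxes, continuity of the Perron--Frobenius eigenvalue), so $\rho=\sup_Y\rho(M_{\bo,Y})\ge\rho(M_{\bs})=\Phi\,\rho(P)$.

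The whole lower bound therefore collapses to the single identity $\rho(P)=1$: a stochastic kernel on $\Z^d$ with zero mean drift has subexponential return probabilities. This is the one place where the lattice structure is indispensable — it is exactly the content underlying \lemref{lem:4} — and it is the main obstacle: one must rule out exponential escape for the driftless (spatially inhomogeneous) walk $P$, which I expect to handle by a Fourier/central-limit analysis on $\Z^d$ (with a Sinai-type control when $d=1$), together with the finite-box convergence $\rho(P_Y)\uparrow\rho(P)$ from (\ref{eq:specapprox}). I would also need to dispatch the minimax technicalities — compactness of $\hat{\mathcal K}$ from a uniform bound on the mean offspring, and coercivity of $\phi_m$ in $\theta$ from (\ref{cond:irred}) — but these are routine compared with establishing $\rho(P)=1$.
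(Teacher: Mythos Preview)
Your upper bound $\rho\le\Phi$ via the exponential test functions $f_\theta(x)=e^{\langle\theta,x\rangle}$ together with Sion's minimax is exactly the paper's argument (it is the second half of the proof of \lemref{lem:4}).

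For the lower bound $\rho\ge\Phi$ you take a genuinely different route. The paper first proves \lemref{lem:3}: by a Bellman/dynamical-programming argument on finite boxes it shows that the supremum of $\rho(M_{\bs})$ over $\mathcal K$-valued deterministic environments coincides with the supremum over $\hat{\mathcal K}$-valued ones (the optimisation problem (\ref{eq:bell3}) is affine in each $m(x,\cdot)$, so replacing $\mathcal K$ by its convex hull does not change the optimal value $t^{\ast}$). Once this is in hand, the lower bound is immediate: the \emph{homogeneous} environment $\sigma_x\equiv m^{\ast}\in\hat{\mathcal K}$ is admissible, and for a homogeneous walk $\rho(M^h_{m^{\ast}})=\inf_\theta\phi_{m^{\ast}}(\theta)=\Phi$ is classical (large deviations, $\rho=\exp(-I(0))$). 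No driftless inhomogeneous walk ever appears.

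Your approach instead stays inside $\mathcal K$, decomposes $m^{\ast}$ by Carath\'eodory, and builds a periodic $\mathcal K$-valued environment whose $\theta^{\ast}$-tilt is stochastic with zero average drift. This is correct as far as it goes, and the equalisation $\phi_{m(\omega_i)}(\theta^{\ast})=\Phi$ does follow from the saddle and affinity as you say. But you then have to prove $\rho(P)=1$ for a \emph{spatially inhomogeneous} (periodic) driftless walk, which you rightly flag as the main obstacle. It is provable --- a periodic walk with zero homogenised drift satisfies a local CLT, hence $p^{(n)}(0,0)\asymp n^{-d/2}$ --- but it is considerably more work than the paper's one-line reduction to the homogeneous case. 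In short: the paper trades your analytic obstacle (subexponential return for a driftless periodic walk) for a purely algebraic one (convexifying the support does not increase the finite-box spectral radius), and the latter is both shorter and avoids any dimension-dependent analysis.
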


\begin{rem}
The fact that condition (\ref{eq:cond}) is equivalent to
\emph{Condition L} of \cite{comets:05} follows through
straightforward calculation and the fact that the $\sup$ and
$\inf$ in (\ref{eq:cond}) are attained.
\end{rem}

The remaining part of the paper is devoted to the identification
of $\rho$ in order to show  the explicit criterion for transience,
Theorem \ref{thm:3}. The first observation is that the spectral
radius is \emph{as large as possible}, compare with Lemma
\ref{lem:2}, then that it only depend on the convex hull of the
support $\K,$ compare with Lemma \ref{lem:3}, and the last one
that it equals the spectral radius of an appropriate homogeneous
BMC, compare with Lemma \ref{lem:4}.

The first Lemma is a straightforward generalization of
\cite{mueller06} or alternatively follows from the proof of Lemma
\ref{lem:3}.
\begin{lem}\label{lem:2}We have $$
\rho:=\rho(M_{\bo})=\sup_{\bs} \rho(M_{\bs}) ~for~
\Theta\mbox{-a.a.}~\bo,$$ where the $\sup$ is over all possible
collections $\bs=(\sigma_x)_{x\in \Z^d}$ with
$\sigma_x\in\mathcal{K}.$
\end{lem}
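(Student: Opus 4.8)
The plan is to prove $\rho(M_{\bo}) = \sup_{\bs} \rho(M_{\bs})$ for $\Theta$-a.a.\ $\bo$ by establishing the two inequalities separately. The inequality $\rho(M_{\bo}) \leq \sup_{\bs} \rho(M_{\bs})$ is immediate, since $\bo$ is itself one of the admissible collections $\bs$ with $\sigma_x = \omega_x \in \mathcal{K}$ for $\Theta$-a.a.\ $\bo$ (each $\omega_x$ lies in the support $\mathcal{K}$ almost surely). Thus the real content is the reverse inequality $\rho(M_{\bo}) \geq \rho(M_{\bs})$ for \emph{every} admissible deterministic collection $\bs$, and this is where the argument must do its work.

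For the reverse inequality I would exploit the characterization of the spectral radius as a supremum over finite irreducible subsets, equation (\ref{eq:specapprox}), namely $\rho(M_{\bs}) = \sup_Y \rho(M_{\bs,Y})$ where $Y$ ranges over finite irreducible subsets of $\Z^d$ and $\rho(M_{\bs,Y})$ is the Perron--Frobenius eigenvalue of the finite matrix $M_{\bs,Y}$. It therefore suffices to show that for each fixed finite $Y$ and each target $\bs$, the value $\rho(M_{\bo,Y'})$ can be made arbitrarily close to $\rho(M_{\bs,Y})$ for a suitable finite patch $Y'$ in the \emph{random} environment $\bo$. The key point is that, since the environment is i.i.d.\ with marginal support $\mathcal{K}$, and since $\sigma_x \in \mathcal{K} = \mathrm{supp}(Q)$, the local configuration of $\bs$ on $Y$ can be approximated arbitrarily well by $\bo$ on some translated copy $Y' = Y + z$: for any $\delta > 0$ the event that $\omega_{x+z}$ lies within $\delta$ of $\sigma_x$ simultaneously for all $x \in Y$ has positive probability (by independence and the definition of support), so by ergodicity (or a Borel--Cantelli argument using the infinitely many disjoint translates) such a $z$ occurs $\Theta$-a.s. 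Since the Perron--Frobenius eigenvalue of a finite irreducible nonnegative matrix depends continuously on its entries, $\rho(M_{\bo,Y+z})$ is within an arbitrarily small error of $\rho(M_{\bs,Y})$, giving $\rho(M_{\bo}) \geq \rho(M_{\bs,Y}) - \eps$ for all $\eps > 0$, hence $\rho(M_{\bo}) \geq \rho(M_{\bs,Y})$, and finally $\rho(M_{\bo}) \geq \sup_Y \rho(M_{\bs,Y}) = \rho(M_{\bs})$.

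The main obstacle I anticipate is twofold. First, I must make sure the approximation of $\bs$ by $\bo$ on translated patches is uniform enough to control the Perron--Frobenius eigenvalue: the continuity of the eigenvalue in the matrix entries is standard, but I must verify that the approximating patches $Y+z$ yield \emph{irreducible} submatrices, which is guaranteed by condition (\ref{cond:irred}) ensuring irreducibility for all realizations with $\sigma_x \in \mathcal{K}$. Second, taking the supremum over the uncountably many collections $\bs$ requires care, since the almost-sure statement must hold simultaneously for the supremum; this is handled by noting that the relevant quantity $\rho(M_{\bs})$ is itself a supremum over the \emph{countably} many finite irreducible subsets $Y$ and that each $\rho(M_{\bs,Y})$ is continuous in $\bs$ restricted to $Y$, so the supremum over $\bs$ reduces to a supremum over configurations on countably many finite sets, each realized $\Theta$-a.s.\ up to arbitrary precision. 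This reduction is the technical heart of the argument, and it is exactly the point where the generalization of \cite{mueller06} enters.
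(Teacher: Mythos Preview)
Your approach is correct and is precisely the ``straightforward generalization of \cite{mueller06}'' that the paper invokes; note that the paper does \emph{not} supply a standalone proof of this lemma, it merely cites \cite{mueller06} and remarks that the statement also drops out of the proof of Lemma~\ref{lem:3}. The argument you outline---reduce via (\ref{eq:specapprox}) to finite irreducible patches, use the i.i.d.\ structure and the definition of $\mathcal{K}=\mathrm{supp}(Q)$ to find $\Theta$-a.s.\ a translate on which $\bo$ approximates any prescribed $\bs|_Y$, then invoke continuity of the Perron--Frobenius eigenvalue---is exactly the mechanism behind that reference, and your handling of the uncountability of $\bs$ via the countable family of finite $Y$'s is the right way to close the argument.
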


Furthermore, the spectral radius does only depend on the convex
hull $\hat\K$ of the support $\K$ of $Q,$ compare with
\cite{varadhan2003b} where this is done for random walk in random
environment.

\begin{lem}\label{lem:3}
We have $$ \rho:=\rho(M_{\bo})=\sup_{\hat{\bs}} \rho(M_{\hat\bs})
~for~ \Theta\mbox{-a.a.}~{\bo},$$ where the $\sup$ is over all
possible collections $\hat{\bs}=(\hat\sigma_x)_{x\in \Z^d}$ with
$\hat\sigma_x\in\hat{\mathcal{K}}.$
\end{lem}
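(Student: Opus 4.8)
The plan is to prove the equality $\rho=\sup_{\hat\bs}\rho(M_{\hat\bs})$ by establishing two inequalities. Since every element of $\K$ also lies in $\hat\K$ (because $\K\subset\hat\K$ trivially, as $\hat\K$ is the convex hull), the collections $\bs$ with $\sigma_x\in\K$ form a subset of the collections $\hat\bs$ with $\hat\sigma_x\in\hat\K$. Hence by \lemref{lem:2},
\[
\rho=\sup_{\bs}\rho(M_{\bs})\leq\sup_{\hat\bs}\rho(M_{\hat\bs}),
\]
and only the reverse inequality $\sup_{\hat\bs}\rho(M_{\hat\bs})\leq\rho$ requires work. The strategy for this direction is to show that replacing any $\hat\sigma_x\in\hat\K$ by a suitable element (or mixture approximating it) from $\K$ does not decrease the relevant spectral radius, so that the supremum over the larger set $\hat\K$ is already attained arbitrarily well within $\K$.

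First I would reduce the problem to finite irreducible subsets using the approximation formula \eqref{eq:specapprox}, namely $\rho(M_{\hat\bs})=\sup_Y\rho(M_{{\hat\bs},Y})$ over finite irreducible $Y\subset\Z^d$, where on a finite set $\rho(M_{{\hat\bs},Y})$ is the Perron--Frobenius eigenvalue. This is advantageous because the Perron--Frobenius eigenvalue of a finite nonnegative irreducible matrix is a monotone and continuous function of its entries, and the entries $m_{\hat\sigma_x}(x,y)$ depend linearly (affinely) on the measure $\hat\sigma_x$ through the first-moment map $\hat\sigma_x\mapsto m_{\hat\sigma_x}(x,\cdot)$. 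The key structural fact I would exploit is that $\hat\K$ is the convex hull of $\K$: any $\hat\sigma_x\in\hat\K$ is a finite convex combination $\sum_j \lambda_j\,\kappa^{(j)}$ of points $\kappa^{(j)}\in\K$, and correspondingly the first moment vector $m_{\hat\sigma_x}$ is the same convex combination of the $m_{\kappa^{(j)}}$. Because the Perron--Frobenius eigenvalue is convex (being a supremum of linear functionals, via the Collatz--Wielandt or variational characterization coming from \lemref{lem:1}), its maximum over a polytope of admissible first-moment configurations is attained at an extreme point, and the extreme points correspond to choices drawn from $\K$ itself.

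The cleanest route uses \lemref{lem:1}: on a fixed finite irreducible $Y$,
\[
\rho(M_{{\hat\bs},Y})=\min\{t>0:\exists\,f>0\text{ on }Y\text{ with }M_{{\hat\bs},Y}f\leq tf\}.
\]
Fixing a candidate test function $f>0$, the quantity $\max_{x\in Y}\bigl(M_{\hat\sigma_x}f(x)/f(x)\bigr)$ is, for each coordinate $x$ separately, a linear functional of $\hat\sigma_x$ over the convex set $\hat\K$, so its supremum is attained at an extreme point of $\hat\K$, which is an element of $\K$ (or a limit of such, handled by the compactness already invoked in the proof of \lemref{lem:1} and the continuity of the first-moment map under the uniform condition \eqref{cond:irred}). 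Optimizing coordinate-by-coordinate and then over $f$ shows $\rho(M_{{\hat\bs},Y})\leq\sup_{\bs}\rho(M_{\bs,Y})\leq\rho$, and taking the supremum over $Y$ and over $\hat\bs$ gives $\sup_{\hat\bs}\rho(M_{\hat\bs})\leq\rho$.

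The main obstacle I anticipate is the interchange of the coordinatewise optimization with the global spectral-radius optimization, together with the passage from the open-set compactness used in \lemref{lem:1} to extreme points of $\hat\K$ that may lie only in the closure of $\K$. One must verify that the first-moment map is affine and continuous, that extreme points of $\hat\K=\operatorname{conv}(\K)$ are reachable within $\K$ (Krein--Milman / Carathéodory plus the finiteness of $\U$, which makes $\V$ and hence the moment space finite-dimensional), and that the uniform irreducibility condition \eqref{cond:irred} is preserved so that every intermediate configuration $\bs$ still defines an irreducible $M_{\bs}$ to which \lemref{lem:2} applies. Finiteness of $\U$ is what keeps the moment vectors in a fixed finite-dimensional simplex and makes these convexity arguments routine rather than delicate.
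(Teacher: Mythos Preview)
Your reduction to finite irreducible sets via \eqref{eq:specapprox} and your recognition that the first-moment map is affine in each site variable are both exactly the right ingredients, and the paper uses them too. The difference lies in how the site-wise linearity is converted into a statement about spectral radii.

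You invoke the upper Collatz--Wielandt formula $\rho(M_{\hat{\bs},Y})=\min_{f>0}\max_{x\in Y}(M_{\hat{\bs},Y}f)(x)/f(x)$ and then, for a fixed $f$, replace each $\hat\sigma_x$ by an extremal $\kappa_x\in\K$ maximizing the linear functional at that site. This indeed shows $\min_f\sup_{\hat{\bs}}\max_x(\cdots)=\min_f\sup_{\bs}\max_x(\cdots)$, but what you actually need is $\sup_{\hat{\bs}}\min_f\max_x(\cdots)\leq\sup_{\bs}\min_f\max_x(\cdots)$, and passing from one to the other is precisely the non-trivial minimax direction you flag as an obstacle. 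Since the objective is neither concave in $f$ nor does Sion apply, this gap is real and is not closed by the remarks you make about Krein--Milman or compactness. A quick repair, staying within your framework: for a fixed $\hat{\bs}$ on a finite irreducible $Y$, take the Perron eigenvector $f>0$ of $M_{\hat{\bs},Y}$, so $M_{\hat{\bs},Y}f=\rho(M_{\hat{\bs},Y})f$; then for each $x$ choose $\kappa_x\in\K$ with $(M_{\kappa_x}f)(x)\geq(M_{\hat\sigma_x}f)(x)$ (possible since the latter is a convex combination of the former over $\K$), and conclude from $M_{\bs,Y}f\geq\rho(M_{\hat{\bs},Y})f$ and the lower Collatz--Wielandt bound that $\rho(M_{\bs,Y})\geq\rho(M_{\hat{\bs},Y})$. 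This bypasses the minimax entirely.

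The paper avoids the issue differently: it recasts $\rho(M_F)$ as the critical parameter of a boundary-value problem on a finite set $F$ with absorbing boundary, namely $\rho(M_F)=\inf\{t>0:f_F(t,\cdot)<\infty\}$ where $f_F(t,x)=\sum_k\widetilde m^{(k)}(x,\triangle)t^{-k}$ solves $\sum_y m(x,y)f(y)=t f(x)$ on $F^\circ$ with $f\equiv1$ on $\partial F$. The point is that $\sup_{\bs}\rho(M_{\bs,F})$ is then governed by the Bellman-type equation $\sup_{m(x,\cdot)\in\K}\sum_y m(x,y)f(y)=t f(x)$, in which the optimization is already decoupled site by site inside a single fixed-point equation; the site-wise supremum of a linear functional is manifestly unchanged when $\K$ is replaced by $\hat\K$. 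In short, the paper uses a dynamic-programming formulation that builds the coordinatewise optimization into the eigen-equation itself, whereas your argument performs the optimization outside the spectral characterization and therefore needs (but lacks) a minimax step.
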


\begin{proof}
In order to see this recall that for any irreducible kernel $K$ we
have, due to Lemma \ref{lem:1} and Equation (\ref{eq:specapprox}),
$$\rho(K)=\min\{t>0:~\exists\, f(\cdot)>0\mbox{ such that }Kf\leq
t f\}$$ and
\begin{equation} \rho(K)=\sup_Y \rho(K_Y),
\end{equation}
where the supremum is over finite and irreducible subsets
$Y\subset \Z^d.$ Let $F\subset \Z^d$ be an irreducible subset with
respect to $M$ and define as usual the complement
$F^c:=\Z^d\setminus F,$ the boundary $\partial F:=\{x\in
F:~m(x,y)>0\mbox{ for some } y\in F^c\}$ and the inner points
$F^\circ=F\setminus\partial F$  of the set $F.$ The key point is
now to consider the equation
\begin{eqnarray}\label{eq:bell1}
\sum_y m(x,y) f(y)&=&t\cdot f(x)\quad \forall x\in F^\circ
\end{eqnarray} with boundary condition
 $f(y) =1 \quad \forall y\in \partial F.$
In order to give a solution of  equation (\ref{eq:bell1}), we
consider a modified process where we identify the border of $F$
with a single point, say $\triangle.$ Let  $\widetilde M$ be the
finite kernel over $\widetilde F:=F^\circ\cup\{\triangle\}$ with
$\widetilde m(x,y):=m(x,y)$ for all $x,y\in F^\circ$, $\widetilde
m(x,\triangle):=\sum_{y\in \partial F} m(x,y)$ for all $x\in
F^\circ$ and $\widetilde m(\triangle, x)=0$ for all $x\in
\widetilde F.$ Observe that $\widetilde M$ is finite with
absorbing state $\triangle.$ Then the function
$$\widetilde f_F(t,x):= \sum_{k=0}^\infty \widetilde m^{(k)}(x,\triangle)
(1/t)^k$$ is the unique solution of
\begin{eqnarray}\label{eq:bell2}
\sum_y \widetilde m(x,y) f(y)&=&t\cdot f(x)\quad \forall x\in
F^\circ
\end{eqnarray} with boundary condition
 $f(\triangle) =1.$

One can think of $\widetilde f_F(t,x)$ as the expected number of
particles visiting $\partial F$ for the first time in their
ancestry line in the multi-type Galton-Watson process with first
mean $1/t\cdot \widetilde M$ and one original particle at $x\in
F.$ Furthermore, we have that
$$\rho(\widetilde M)=\inf\{t>0:~ \widetilde f_F(t,x)<\infty\quad \forall x\in F\},$$
since $R(\widetilde M)=1/\rho(\widetilde M)$ is the convergence
radius of $\widetilde G(x,x|z):=\sum_{k=0}^\infty \widetilde
m^{(k)}(x,x)z^k$ for all $x\in F^\circ.$ Since $m_F(x,y)=m(x,y)$
for all $x,y\in F$ we have that the convergence radius of
$\widetilde G(x,x|z)$ equals the one of
$G_F(x,x|z):=\sum_{k=0}^\infty m_F^{(k)}(x,x)z^k$ for all $x\in
F^\circ.$ Eventually,
$$\rho( M_F)=\inf\{t>0:~ f_F(t,x)<\infty\quad \forall x\in F\},$$
where $f_F(t,x):=\tilde f_F(t,x)$ for all $x\in F^\circ$ and
$f_F(t,y):=\tilde f_F(t,\triangle)$ for all $y\in \partial F.$

The last step is now to determine for every finite set $F$ the
largest spectral radius over all possible choices of $M_F$ with
$m(x,\cdot)\in \mathcal{K}$  and show that this value does not
change if we maximize over all possible choices $M_F$ with
$m(x,\cdot)\in \hat{\mathcal{K}}.$ We consider the following
dynamical programming problem:

\begin{eqnarray}\label{eq:bell3}
\sup_{m(x,\cdot)\in \mathcal{K}}\sum_y m(x,y) f(y)&=&t\cdot
f(x)\quad \forall x\in F^\circ
\end{eqnarray} with boundary condition
 $f(y) =1 \quad \forall y\in \partial F.$ The goal of the optimization
 problem is to maximize $f_F(t,x)$ over the possible choices of
 $M_F.$ Observe that there will be a maximal value of $t^*$ such
 that the solution $f^*$ of the optimization problem is finite for all $t>t^*.$ The value $t^*$ is now
 equal to the largest spectral radius $\rho(M_F)$ we can achieve. We conclude
 with the observation, that $t^*$ does not change if we replace
 $\mathcal{K}$ by its convex hull $\hat\K$ and the fact that
 $\rho(M)=\sup_F \rho(M_F).$
\end{proof}

The next step is to show that the spectral radius $\rho$ of the
BRWRE equals the spectral radius of some homogeneous BMC and can
therefore be calculated explicitly. We generalize the
argumentation of   \cite{mueller06} and \cite{varadhan2003b}.

\begin{lem}\label{lem:4}
For a RWRE on $\Z^d$ we have for $\Theta$-a.a. realizations
${\bo}$
\begin{eqnarray}
   \rho(M_{\bo}) &=& \sup_{m\in\hat{\mathcal{K}}} \rho(M^h_m) \\
   &=& \sup_{m\in\hat{\mathcal{K}}} \inf_{\theta\in\R^d}
   \left( \sum_{s\in \U} e^{\langle
   \theta, s\rangle} m(s)\right),
\end{eqnarray}
where $M^h_m$ is the transition matrix of the BMC with
$m(x,x+s)=m(0,s)=:m(s)$ for all $x\in\Z^d,~s\in \U.$
\end{lem}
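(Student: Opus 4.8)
The statement contains two equalities, and I would prove them in the opposite order to how they are displayed. The second, explicit equality is really a statement about a single homogeneous operator: writing $\phi_m(\theta):=\sum_{s\in\U}e^{\langle\theta,s\rangle}m(s)$, I would first establish that
\[
\rho(M^h_m)=\inf_{\theta\in\R^d}\phi_m(\theta)\qquad\text{for every }m\in\hat{\mathcal{K}},
\]
and then take the supremum over $m\in\hat{\mathcal{K}}$ to obtain the second line. The first equality then follows by combining this homogeneous identity with Lemma \ref{lem:3} and a minimax exchange, as explained below.

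For the homogeneous identity, the upper bound $\rho(M^h_m)\le\inf_\theta\phi_m(\theta)$ is immediate from Lemma \ref{lem:1}: the exponential $f(x)=e^{\langle\theta,x\rangle}$ is a genuine eigenfunction, since translation invariance gives $M^h_m f(x)=e^{\langle\theta,x\rangle}\phi_m(\theta)=\phi_m(\theta)f(x)$, so $f\in S^+(M^h_m,\phi_m(\theta))$ for every $\theta$. For the matching lower bound I would use exponential tilting: the irreducibility condition (\ref{cond:irred}) forces the support of $m$ to contain a generating set with opposite directions, so $0$ lies in the interior of its convex hull, $\phi_m$ is coercive and strictly convex, and the infimum is attained at a unique $\theta^\ast$ with $\nabla\phi_m(\theta^\ast)=0$. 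Tilting by $\theta^\ast$ turns $m/\phi_m(\theta^\ast)$ into a centred probability step distribution $\bar m$, and because the constraint $\sum_i s_i=0$ kills the tilting factor one obtains the identity $m^{(n)}(0,0)=\phi_m(\theta^\ast)^n\,\P(\bar S_n=0)$ for the associated random walk $\bar S_n$. Since $\bar m$ has mean zero and genuinely spans $\Z^d$, the local central limit theorem gives $\P(\bar S_n=0)^{1/n}\to1$ along the relevant residue class, whence $\limsup_n m^{(n)}(0,0)^{1/n}=\phi_m(\theta^\ast)=\inf_\theta\phi_m(\theta)$. (This fact is also available from the homogeneous theory in \cite{woess}.)

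For the first equality I split it into two inequalities. The bound $\rho(M_{\bo})\ge\sup_{m}\rho(M^h_m)$ is easy: every constant collection $\hat\sigma_x\equiv m$ is admissible in Lemma \ref{lem:3}, so $\rho(M^h_m)\le\sup_{\hat\bs}\rho(M_{\hat\bs})=\rho(M_{\bo})$. The reverse inequality is the crux. Here I would exploit the concavity--convexity structure of $\phi_m(\theta)$ --- linear, hence concave, in $m$ and convex in $\theta$ --- together with the compactness and convexity of $\hat{\mathcal{K}}$ to apply Sion's minimax theorem and exchange the order of the extrema:
\[
r:=\sup_{m\in\hat{\mathcal{K}}}\inf_{\theta\in\R^d}\phi_m(\theta)=\inf_{\theta\in\R^d}\sup_{m\in\hat{\mathcal{K}}}\phi_m(\theta).
\]
The point of the exchange is that it furnishes, for every $\delta>0$, a single $\theta_\delta$ with $\sup_{m\in\hat{\mathcal{K}}}\phi_m(\theta_\delta)\le r+\delta$. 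For an arbitrary collection $\hat\bs$ the same exponential $f(x)=e^{\langle\theta_\delta,x\rangle}$ is then simultaneously superharmonic at every site, $M_{\hat\bs}f(x)=e^{\langle\theta_\delta,x\rangle}\phi_{\hat\sigma_x}(\theta_\delta)\le(r+\delta)f(x)$, so Lemma \ref{lem:1} gives $\rho(M_{\hat\bs})\le r+\delta$ (the kernels $M_{\hat\bs}$ being irreducible by (\ref{cond:irred})). Taking the supremum over $\hat\bs$, invoking Lemma \ref{lem:3} and letting $\delta\to0$ yields $\rho(M_{\bo})\le r=\sup_m\rho(M^h_m)$.

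I expect the local-central-limit lower bound in the homogeneous identity to be the main technical obstacle, since it requires the infimum of $\phi_m$ to be attained at an interior critical point and the tilted walk to be genuinely $d$-dimensional --- both of which rest on the uniform irreducibility (\ref{cond:irred}). The minimax exchange is conceptually the key step that makes the \emph{inhomogeneous} bound work, and there one must check carefully that $\hat{\mathcal{K}}$ is compact and that $\phi_m$ is jointly continuous, so that Sion's hypotheses genuinely hold.
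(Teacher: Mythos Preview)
Your proof is correct and follows essentially the same route as the paper: both use Sion's minimax theorem to exchange $\sup_m$ and $\inf_\theta$, obtain a single $\theta$ that works uniformly over $\hat{\mathcal K}$, and then exploit the exponential $f(x)=e^{\langle\theta,x\rangle}$ to bound the inhomogeneous spectral radius from above. The only cosmetic differences are that the paper bounds $m_{\bo}^{(n)}(0,0)$ by a direct one-line induction on $\sum_{x}e^{\langle\theta,x\rangle}m_{\bo}^{(n)}(0,x)$ rather than invoking Lemma~\ref{lem:1}, and it disposes of the homogeneous identity $\rho(M^h_m)=\inf_\theta\phi_m(\theta)$ by citing the large-deviation relation $\rho(P)=e^{-I(0)}$ instead of your tilting/LCLT argument --- but these are the same computations in different clothing.
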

\begin{proof}
The second equality is more or less standard. It  follows for
example  from the fact that $\rho(P)=\exp(-I(0)),$ where
$I(\cdot)$ is the rate function of the large deviations of the
random walk on $\Z^d$ with transition probabilities $P:=M/\sum_s
m(s).$ Since, due to Lemma \ref{lem:3}, we have $\rho(M_{\bo})\geq
\sup_{m\in\hat{\mathcal{K}}} \rho(M^h_m),$  it remains to show
$$\rho(M_{\bo})\leq  \sup_{m\in\hat{\mathcal{K}}}
\inf_{\theta\in\R^d} \left(\sum_s e^{\langle  \theta, s\rangle}
m(s)\right)$$ for $\Theta$-a.a. realizations ${\bo}.$ Observing
that the function $\phi(m(\cdot),\theta):=\left(\sum_s e^{\langle
\theta, s\rangle} m(s)\right)$ is convex in $\theta$ and linear in
$m(\cdot)$, we get by a standard minimax argument, compare with
\cite{sion:58}, that
$$\sup_{m\in\hat{\mathcal{K}}} \inf_{\theta\in\R^d} \sum_s e^{\langle
\theta, s\rangle} m(s)= \inf_{\theta\in\R^d}
\sup_{m\in\hat{\mathcal{K}}} \sum_s e^{\langle \theta, s\rangle}
m(s)=:c.$$ Let $\eps>0$ and $\theta\in \R^d$  such that
   $$ \sup_{m\in\hat{\mathcal{K}}} \sum_s e^{\langle
\theta, s\rangle} m(s) \leq c (1+\eps). $$ By induction we have
for any realization ${\bo}:$
$$ \sum_{x_n\in\Z^d}e^{\langle \theta, x_n\rangle} m_{\bo}(0,x_n) \leq
   (c (1+\eps))^n.$$
 Therefore by observing only $x_n=0:$
    $$ m_{\bo}^{(n)}(0,0)\leq (c (1+\eps)) ^n,$$ and hence
    $\rho(M_{\bo})\leq c (1+\eps)$ for all $\eps>0.$
\end{proof}

\begin{small}
\bibliography{bib}
\end{small}

\bigskip
 \noindent
\begin{tabular}{l}
Sebastian M\"uller \\
 Institut f\"ur Mathematische Statistik\\
 Universit\"at Münster\\
 Einsteinstr. 62  \\
 D-48149 Münster\\
 Germany\\
{\tt Sebastian.Mueller@math.uni-muenster.de}\\
\end{tabular}

\end{document}